%
%
%

\documentclass[12pt]{amsart}
\usepackage{amssymb,latexsym}

\newcommand{\Q}{\mathbb{Q}}
\newcommand{\Z}{\mathbb{Z}}

\def\FAA{\Q \kern .05em \langle y_1,y_2,\dots \rangle}
\def\NC{\Z \kern .05em \langle y_1,y_2,\dots \rangle}

\theoremstyle{plain}
\newtheorem{theorem}{Theorem}[section]

\newtheorem{lemma}[theorem]{Lemma}
\newtheorem{proposition}[theorem]{Proposition}

\theoremstyle{remark}
\newtheorem{remark}{Remark}[section]
\newtheorem{example}[remark]{Example}

\newcommand{\vanish}[1]{}


\begin{document}
\title[Multiplicity Free Functions]
{Multiplicity free expansions of Schur $P$-functions}
\author{Kristin M. Shaw}
\email{krishaw@math.ubc.ca}
\author{Stephanie van Willigenburg}
\email{steph@math.ubc.ca}
\address{Department of Mathematics, Univ.
of British Columbia, Vancouver, BC  V6T 1Z2}
\thanks{Both authors were supported in part by the
National Sciences and Engineering Research Council of Canada.}

\subjclass[2000]{Primary 05E05, 05A17; Secondary 05A19, 05E10}
\keywords{multiplicity free, Schur functions, Schur $P$-functions, spin characters, staircase  partitions}

\begin{abstract}
After deriving inequalities on coefficients arising in the expansion of a
Schur $P$-function in terms of Schur functions we give criteria for when
such expansions are multiplicity free. From here we study the multiplicity
of an irreducible spin character of the twisted symmetric group in the product
of a basic spin character with an irreducible character of the symmetric
group, and determine when it is multiplicity free.
\end{abstract}
\maketitle
\section{Introduction \label{intro}}
In \cite{StembridgeII}, Stembridge determined when the product of two Schur functions is multiplicity free, which yielded when the outer product of characters of the symmetric groups did not have multiplicities. Meanwhile, in \cite{Bessenrodt}, Bessenrodt determined when the product of two Schur $P$-functions  is multiplicity free. This led to an analogous classification with respect to projective outer products of spin characters of double covers of the symmetric groups. In this article we interpolate between these two results to determine when a Schur $P$-function expanded in terms of Schur functions is multiplicity free. As an application we give criteria for when the multiplicity of an irreducible spin character of the twisted symmetric groups in the product of a basic spin character with an irreducible character of the symmetric groups is $0$ or $1$.

The remainder of this paper is structured as follows. We review the necessary definitions in the rest of this section. Then in Section 2 we derive some equalities and inequalities concerning certain coefficients. In Section 3 we give criteria for a Schur $P$-function to have a multiplicity free Schur function expansion before applying this to character theory in Section 4.

\subsection{Partitions}
A \emph{partition} $\lambda = \lambda _1\lambda _2\ldots \lambda _k$ of $n$ is a list of integers $\lambda _1\geq \lambda _2 \geq \ldots \geq \lambda _k >0$ whose sum is $n$, denoted $\lambda \vdash n$. We say $k$ is the \emph{length} of $\lambda$ denoted by $l(\lambda)$, $n$ is the \emph{size} of $\lambda$ and call the $\lambda _i$ \emph{parts}. Also denote the set of all partitions of $n$ by $P(n)$. Contained in $P(n)$ is the subset of partitions $D(n)$ consisting of all the partitions whose parts are distinct i.e. 
$\lambda _1> \lambda _2 > \ldots > \lambda _k >0$. We call such partitions \emph{strict}. A strict partition that will be of particular interest to us will be the \emph{staircase} (of length $k$):
$k(k-1)(k-2)\ldots 321$. Two other partitions that will be of interest to us are $\lambda +1^r$ and $\lambda \cup r$ for any given partition $\lambda$, and positive integer $r$. The partition 
$\lambda +1^r$ is formed by adding $1$ to the parts $\lambda _1, \lambda _2, \ldots , \lambda _r$ and $\lambda \cup r$ is formed by sorting the multiset of the union of parts of $\lambda $ and $r$. With these concepts in mind we are able to define a final partition that will be of interest to us, known as a near staircase. A partition  is a \emph{near staircase} if it is of the form $\lambda +1^r$, $1\leq r\leq k$ or $\lambda \cup r$, $r \geq k+1$ and $\lambda$ is the staircase of length $k$.

\begin{example}
$6321$ and $5421$ are both near staircases of $12$.
\end{example}

\subsection{Diagrams and tableaux}
For any partition $\lambda \vdash n$ the associated \emph{(Ferrers) diagram}, also denoted by $\lambda$, is an array of left justified boxes with $\lambda _i$ boxes in the $i$-th row, for $1\leq i \leq l(\lambda) $. Observe that in terms of diagrams a near staircase is more easily visualised as a diagram of a strict partition such that the deletion of exactly one row or column yields the diagram of a staircase. 

\begin{example}
The near staircases $6321$ and $5421$.
\begin{center}\begin{picture}(100,50)(-30,-5)
\put(0,0){\line(1,0){10}} 
\put(0,10){\line(1,0){20}}
\put(0,20){\line(1,0){30}} 
\put(0,30){\line(1,0){60}}
\put(0,40){\line(1,0){60}}

\put(0,40){\line(0,-1){40}} 
\put(10,40){\line(0,-1){40}}
\put(20,40){\line(0,-1){30}} 
\put(30,40){\line(0,-1){20}}
\put(40,40){\line(0,-1){10}}
\put(50,40){\line(0,-1){10}}
\put(60,40){\line(0,-1){10}}
\end{picture}
\begin{picture}(100,50)(-30,-5)
\put(0,0){\line(1,0){10}} 
\put(0,10){\line(1,0){20}}
\put(0,20){\line(1,0){40}} 
\put(0,30){\line(1,0){50}}
\put(0,40){\line(1,0){50}}

\put(0,40){\line(0,-1){40}} 
\put(10,40){\line(0,-1){40}}
\put(20,40){\line(0,-1){30}} 
\put(30,40){\line(0,-1){20}}
\put(40,40){\line(0,-1){20}}
\put(50,40){\line(0,-1){10}}
\end{picture}\end{center}
\end{example}  

Given a diagram $\lambda$ then the \emph{conjugate} diagram of $\lambda$, $\lambda '$, is formed by transposing the rows and columns of $\lambda$. The resulting partition $\lambda '$ is also known as the conjugate of $\lambda$. The \emph{shifted} diagram of $\lambda$, $S(\lambda)$ is formed by shifting the $i$-th row $(i-1)$ boxes to the right. If we are given two diagrams $\lambda$ and $\mu$ such that if $\mu$ has a box in the $(i,j)$-th position then $\lambda$ has a box in the $(i,j)$-th position then the \emph{skew} diagram $\lambda /\mu$ is formed by the array of boxes
$$\{ c| c\in \lambda , c\not\in \mu \}. $$ Now that we have introduced the necessary diagrams we are now in a position to fill the boxes and form tableaux.

Consider the alphabet
$$1'<1<2'<2<3'<3< \ldots$$
For convenience we call the integers $\{ 1,2,3,\ldots \}$ \emph{unmarked} and the integers $\{ 1',2',3',\ldots \}$ \emph{marked}. Any filling of the boxes of a diagram $
\lambda$ with letters from the above alphabet is called a \emph{tableau} of shape $\lambda$. If we fill the boxes of a skew or shifted diagram we similarly obtain a skew or shifted tableau. Given any type of tableau, $T$, we define the \emph{reading word} $w(T)$ to be the entries of $T$ read from right to left and top to bottom, and define the \emph{augmented reverse reading word} $\hat{w}(T)$ to be $w(T)$ read backwards with each entry increased by one according to the total order on our alphabet e.g. if $T= \begin{array}{ll}
1'&1\\
1&2'\\
2&\\
\end{array}$ then $w(T)=11'2'12$ and $\hat{w}(T)=3'2'212'$. When there is no ambiguity concerning the tableau under discussion we refer to the reading word and augmented reverse reading word as $w$ and $\hat{w}$ respectively. We also define the content of $T$, $c(T)$, to be the sequence of integers $c_1 c_2 \ldots $ where 
$$c_i = |i|+|i'|$$
and $|i|$ is the number of $i$s in $w(T)$ and $|i'|$ is the number of $i'$s in $w(T)$. For our previous example $c(T)=32$.
Given a word we say it is \emph{lattice} if as we read it if the number of $i$s we have read is equal to the number of $(i+1)$s we have read then the next symbol we read is neither an $(i+1)$ nor an $(i+1)'$ e.g. $11'2'23$ is lattice, however, $11'22'3$ is not.

Let $T$ be a (skew or shifted) tableau, then we say $T$ is \emph{amenable} if it satisfies the following \cite[p259]{MacD}:
\begin{enumerate}
\item The entries in each row of $T$ weakly increase.
\item The entries in each column of $T$ weakly increase.
\item Each row contains at most one $i'$ for $i\geq 1$.
\item Each column contains at most one $i$ for each $i\geq 1$.
\item The word $w\hat{w}$ is lattice.
\item In $w$ the rightmost occurrence of $i$ is to the right of the rightmost occurrence of $i'$ for all $i$.
\end{enumerate}

\begin{example}
The first tableau is amenable whilst the second is not as it violates the lattice condition.
$$\begin{array}{ll}
1'&1\\
1&2'\\
2&
\end{array}
\qquad
\begin{array}{ll}
1'&1\\
1&2\\
2&
\end{array}
$$
\end{example}

Before we define Schur $P$-functions we make two observations about amenable tableaux.

\begin{lemma}\label{no_high_row}
Let $T$ be an amenable tableau then if $i$ or $i'$ appear in row $j$ then $j\geq i$.
\end{lemma}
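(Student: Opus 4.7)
My plan is to prove the equivalent reformulation that every entry of row $j$ of $T$ lies below $j$ in the total order $1' < 1 < 2' < 2 < \ldots$, that is, belongs to $\{1',1,2',2,\ldots,j',j\}$. This implies the lemma, since an $i$ or $i'$ in row $j$ then has $i\leq j$. I will induct on $j$, and the only ingredients I will need are the lattice property of $w\hat{w}$ (condition 5), applied just to the initial segment $w(T)$, together with row-monotonicity (condition 1). Recall that $w$ reads entries right to left and top to bottom, so its first letter is the rightmost entry in the top-most nonempty row.

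For the base case $j=1$, nothing has been read before the first letter of $w$, so $\#i = 0$ for every $i \geq 1$. Since then $\#i = \#(i+1)$ for all $i \geq 1$, the lattice condition immediately forbids the first letter from being any $(i+1)$ or $(i+1)'$ with $i \geq 1$; it must therefore be $1$ or $1'$. Row-monotonicity then propagates this bound leftward through row 1. For the inductive step, assume every entry of row $k$ is at most $k$ for each $k<j$. Then no letter exceeding $j-1$ appears in rows $1,\ldots,j-1$ of the tableau, so after their contribution to $w$ has been read we still have $\#k = 0$ for every $k \geq j$. The next letter of $w$ is the rightmost entry of row $j$, and the same lattice argument, now invoked for each $i \geq j$, forces this letter to be at most $j$. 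Row-monotonicity again spreads this bound to the rest of row $j$.

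I do not anticipate a serious obstacle: the only point needing a brief comment is the case of empty initial rows in the skew setting, where the argument still applies with ``top-most nonempty row'' replacing ``row 1'' and in fact yields the stronger bound $\leq 1$ on that row. Conditions 2, 3, 4 and 6 on amenable tableaux are not used in the proof; only the lattice condition on $w$ and the weak row-increase are needed, which keeps the argument uniform across the ordinary, skew and shifted shapes.
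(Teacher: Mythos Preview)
Your proof is correct and follows essentially the same approach as the paper's: induct on the row index, use the inductive hypothesis to conclude that all letters read before reaching row $j$ have underlying value at most $j-1$, then apply the lattice condition to force the rightmost entry of row $j$ to be at most $j$, and propagate via row-monotonicity. You are more explicit about the base case and about which amenability conditions are actually used, but the argument is the same.
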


\begin{proof}
We proceed by induction on the number of rows of $T$. If $T$ has one row then the result is clear. Assume the result holds up to row $(k-1)$. Consider row $k$. If it has an entry in it greater than $k$, $l$ or $l'$, then it must lie in the rightmost box since the rows of $T$ weakly increase. However, this ensures that $w\hat{w}$ is not lattice as when we first read $l$ or $l'$ in $w$ we will have read no $(l-1)$  or $l$.
\end{proof}

\begin{lemma}\label{increasing_primes}
Let $T$ be an amenable tableau with $c(T)=k(k-1)(k-2)\ldots (k-j)$ then in $w(T)$
$$|i'|\leq |(i+1)'| \qquad 1\leq i\leq j.$$
\end{lemma}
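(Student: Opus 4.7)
The plan is to reduce the primed-letter inequality to a strict inequality between the unmarked-letter counts, and then to exploit condition (6) together with the lattice condition (5) applied to the word $w\hat w$.

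First I would set $a_i=|i|$ and $b_i=|i'|$ in $w(T)$, so that $a_i+b_i=c_i=k-i+1$. Because consecutive values of $c_i$ differ by $1$, the inequality $b_i\le b_{i+1}$ is equivalent to the strict inequality $a_i>a_{i+1}$. If $b_i=0$ the conclusion is immediate, so I would concentrate on the case $b_i\ge 1$; in that case condition (6) requires the existence of a rightmost $i$ in $w$, which forces $a_i\ge 1$ as well.

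Second, I would translate condition (6) through the reversal-and-shift operation that produces $\hat w$ from $w$. Under this operation each $i$ of $w$ becomes an $(i+1)'$ of $\hat w$ and each $i'$ of $w$ becomes an $i$ of $\hat w$, while the order of the letters is reversed. Consequently the leftmost $(i+1)'$ of $\hat w$ corresponds to the rightmost $i$ of $w$, and the leftmost $i$ of $\hat w$ corresponds to the rightmost $i'$ of $w$. Condition (6) thus becomes the assertion that some $(i+1)'$ appears in $\hat w$ before any $i$ does; its existence is guaranteed because $a_i\ge 1$.

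Finally, I would apply the lattice condition (5) at the instant just before this first $(i+1)'$ of $\hat w$ is read. In the prefix read up to that moment, the number of $i$'s equals $a_i$ (since no $i$ from $\hat w$ has yet appeared) while the number of $(i+1)$'s is at least $a_{i+1}$ (the count after $w$, possibly increased by further $(i+1)$'s already taken from $\hat w$). Reading an $(i+1)'$ forces the strict inequality ``number of $i$'s $>$ number of $(i+1)$'s'', so $a_i>a_{i+1}$, and the lemma follows.

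The main obstacle is the bookkeeping in translating condition (6) into a statement about $\hat w$: one must check that reversal really does interchange ``rightmost'' with ``leftmost'' and that the shift $i\mapsto(i+1)'$, $i'\mapsto i$ matches the required letters; once this correspondence is set up, the conclusion follows from a single application of the lattice condition.
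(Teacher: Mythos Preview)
Your argument is correct and follows essentially the same route as the paper's proof: both reduce the primed-letter inequality to a comparison of the unmarked counts $a_i$ and $a_{i+1}$, and both extract the needed strict inequality by applying the lattice condition on $w\hat w$ at the moment one first encounters an $(i+1)'$ (equivalently an $(i+1)$ or $(i+1)'$) in $\hat w$ before any $i$ from $\hat w$ has been read. The paper phrases this by contradiction and is rather terse (its phrase ``because of $c(T)$'' hides exactly the step you make explicit), whereas you argue directly and spell out how condition~(6) translates under the reversal-and-shift into the statement that the leftmost $(i+1)'$ of $\hat w$ precedes the leftmost $i$ of $\hat w$; this makes the role of condition~(6) transparent and is a genuine expository improvement, but the underlying mechanism is the same.
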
 

\begin{proof}
To prove this we consider the lattice condition on $w\hat{w}$. Assume $|i'| > |(i+1)'| $ then as we read $w$ there will be a rightmost occurrence  when $|i|=|(i+1)|$. However, because of $c(T)$ before we read another $i$ in $w\hat{w}$ we must read $(i+1)$ or $(i+1)'$.
\end{proof}

\subsection{Schur $P$-functions}
Given commuting variables $x_1, x_2, x_3, \ldots $ let the $r$-th elementary symmetric function, $e_r$, be defined by
$$e_r=\sum _{i_1<i_2<\ldots <i_r}x_{i_1}x_{i_2}\ldots x_{i_r}.$$
Moreover, for any partition $\lambda = \lambda _1\lambda _2\ldots \lambda _k$ let
$$e_\lambda = e _{\lambda _1}e_{\lambda _2}\ldots e_{\lambda _k}$$
then the algebra of symmetric functions, $\Lambda$, is the algebra over $\mathbb{C}$ spanned by all $e_\lambda$ where $\lambda \vdash n, n>0$ and $e_0=1$. Another well-known basis for $\Lambda$ is the basis of \emph{Schur functions}, $s _\lambda$, defined by
$$s _\lambda = \det (e_{\lambda _i'-i+j})_{1\leq i,j\leq k}$$
where $e_r =0$ if $r<0$. It is these that can be used to define the subalgebra of Schur $P$-functions, $\Gamma$. More precisely, let $\lambda$ be a strict partition of $n>0$, then $\Gamma$ is spanned by $P_0 = 1$ and all
$$P_\lambda = \sum _{\mu\vdash n} g _{\lambda\mu} s_\mu$$
where $g_{\lambda \mu }$ is the number of amenable tableaux $T$ of shape $\mu$ and content $\lambda$.

Amenable tableaux can also be used to describe the multiplication rule for the $P_\lambda$ as follows. Let $\lambda, \mu ,\nu$ be strict partitions then
$$P_\mu P_\nu = \sum f^\lambda _{\mu\nu} P_\lambda $$ 
where $f^\lambda _{\mu\nu}$ is the number of amenable shifted skew tableaux of shape $\lambda /\mu$ and content $\nu$. Further details on Schur and Schur $P$-functions can be found in \cite{MacD}.

\section{Relations on Stembridge coefficients}
The combinatorial descriptions of the $g _{\lambda\mu}$ and $f ^\lambda _{\mu\nu}$ from the previous section were discussed by Stembridge \cite{Stembridge} who also implicitly observed the following useful relationship between them.

\begin{lemma}\label{f_and_g}
If $\lambda \in D(n)$, $\mu \in P(n)$, and $\nu$ is a staircase of length $l(\mu)$ then $$f^{\mu + \nu}_{\lambda \nu} = g_{\lambda \mu}.$$
\end{lemma}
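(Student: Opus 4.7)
The plan is to derive the identity from commutativity of Schur $P$-function multiplication together with a near-trivial translation of shapes, rather than to construct an intricate direct bijection. Since $\Gamma$ is a commutative algebra, $P_\lambda P_\nu = P_\nu P_\lambda$ and hence $f^{\mu+\nu}_{\lambda\nu} = f^{\mu+\nu}_{\nu\lambda}$. It therefore suffices to identify $f^{\mu+\nu}_{\nu\lambda}$ with $g_{\lambda\mu}$, that is, to biject amenable shifted skew tableaux of shape $(\mu+\nu)/\nu$ and content $\lambda$ with amenable tableaux of shape $\mu$ and content $\lambda$.

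First I would unpack the shape. Writing $k=l(\mu)$ and $\nu=k(k-1)\cdots 1$, row $i$ of $S(\mu+\nu)$ occupies columns $i,\dots,\mu_i+k$ while row $i$ of $S(\nu)$ occupies columns $i,\dots,k$; subtracting, row $i$ of $(\mu+\nu)/\nu$ consists of exactly $\mu_i$ cells in columns $k+1,\dots,k+\mu_i$. Globally this region is an unshifted copy of $\mu$ translated $k$ columns to the right, and the claimed bijection is literally the horizontal translation identifying the two diagrams.

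The substance of the proof is then verifying that the six amenability conditions transfer under this translation. Conditions (1)--(4) are phrased within individual rows and columns, so they are preserved. The reading word $w$, read right-to-left and top-to-bottom, depends only on the multiset of entries in each row and their left-to-right order, so it is unchanged; hence $\hat w$, the lattice condition (5), and the rightmost-occurrence condition (6) transfer as well. Crucially the translated region lies entirely in columns $\geq k+1$, so it never abuts the shifted staircase $S(\nu)$ and the shifted/unshifted distinction plays no role.

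Finally, if $S(\lambda)\not\subseteq S(\mu+\nu)$ then $f^{\mu+\nu}_{\lambda\nu}=0$ by convention, and $g_{\lambda\mu}=0$ as well: Lemma~\ref{no_high_row} confines type-$i$ entries of an amenable tableau of shape $\mu$ to rows $i,\dots,k$, and then conditions (3) and (4) bound their total by $\mu_i+k-i+1$, forcing $\lambda_i\leq\mu_i+k-i+1$. The main anticipated obstacle is this amenability bookkeeping, but each condition is local enough that the verification is routine.
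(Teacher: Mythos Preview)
Your proof is correct and follows essentially the same approach as the paper: invoke commutativity to get $f^{\mu+\nu}_{\lambda\nu}=f^{\mu+\nu}_{\nu\lambda}$, then identify the shifted skew shape $(\mu+\nu)/\nu$ with the ordinary shape $\mu$. The paper asserts this identification in a single sentence, whereas you spell out the cell-by-cell translation and check that the amenability conditions transfer; your final paragraph on the case $S(\lambda)\not\subseteq S(\mu+\nu)$ is harmless but unnecessary, since the algebraic commutativity already forces $g_{\lambda\mu}=f^{\mu+\nu}_{\nu\lambda}=f^{\mu+\nu}_{\lambda\nu}=0$ there.
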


\begin{proof}
By definition $f^{\mu + \nu}_{\lambda \nu}$ is the number of amenable shifted skew tableaux of shape
$\mu + \nu / \lambda$ and content $\nu$. However, since $P_\lambda P_\nu = P_\nu P_\lambda$ we know $f^{\mu + \nu}_{\lambda \nu}$ is also the number of amenable shifted skew tableaux of shape
$\mu + \nu / \nu$ and content $\lambda$. In addition, the shifted skew diagram $\mu + \nu / \nu$ is simply the diagram $\mu$. Thus $f^{\mu + \nu}_{\lambda \nu}$ is the number of amenable tableaux of shape $\mu$ and content $\lambda$, and this is precisely $ g_{\lambda \mu}$. \end{proof}

There are also equalities between the $g _{\lambda\mu}$.

\begin{lemma}\label{g_and_conjugate}
If $\lambda \in D(n)$ and $\mu \in P(n)$ then
$$g_{\lambda \mu}=g_{\lambda \mu '}.$$\end{lemma}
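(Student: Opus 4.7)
The plan is to establish that the involution $\omega : \Lambda \to \Lambda$ defined by $\omega(s_\mu) = s_{\mu'}$ fixes every Schur $P$-function, and then to read the stated identity off by comparing Schur expansions. This is a much cleaner route than trying to match amenable tableaux directly.

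The key ingredient is the well-known description of $\Gamma$ found in Macdonald: as a $\mathbb{Q}$-algebra, $\Gamma$ is generated by the odd power sums $p_1, p_3, p_5, \ldots$. Since $\omega$ acts on power sums by $\omega(p_r) = (-1)^{r-1} p_r$, it fixes every $p_r$ with $r$ odd, and hence fixes every element of $\Gamma$. In particular, $\omega(P_\lambda) = P_\lambda$ for every strict partition $\lambda$. I would invoke this as a citation rather than reprove it.

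Once this is in hand, the remainder is a one-line calculation. Applying $\omega$ to the expansion $P_\lambda = \sum_\mu g_{\lambda\mu} s_\mu$ yields
\[
P_\lambda \;=\; \omega(P_\lambda) \;=\; \sum_\mu g_{\lambda\mu}\, s_{\mu'} \;=\; \sum_\nu g_{\lambda\nu'}\, s_\nu,
\]
where the last equality reindexes via $\nu = \mu'$. Comparing coefficients with the original expansion forces $g_{\lambda\mu} = g_{\lambda\mu'}$.

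The only step that is not entirely formal is the fact that $\omega|_\Gamma$ is the identity; this is the substantive content borrowed from the theory. A purely combinatorial alternative would demand a bijection between the amenable tableaux of shape $\mu$ and those of shape $\mu'$ (both of content $\lambda$), and this is where the main obstacle lies: the amenability conditions are genuinely asymmetric between rows and columns. For instance condition (3) bounds the marked letters in each row while condition (4) bounds only the unmarked ones in each column, so naive transposition (even after swapping marks, which would in any case destroy the row-monotonicity) does not preserve amenability. This is precisely why I would prefer the algebraic route via $\omega$.
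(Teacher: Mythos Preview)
Your proof is correct and follows essentially the same route as the paper: both invoke the involution $\omega$ with $\omega(s_\mu)=s_{\mu'}$, use the fact that $\omega(P_\lambda)=P_\lambda$, and read off the equality of coefficients from the Schur expansion. The only difference is that the paper simply cites Macdonald's exercise for $\omega(P_\lambda)=P_\lambda$, whereas you supply the underlying reason via the odd power sums generating $\Gamma$; this is a harmless elaboration, not a different argument.
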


\begin{proof}
Let $\omega : \Lambda \rightarrow \Lambda$ be the  involution on symmetric functions such that $\omega(s_\lambda) = s_{\lambda '}$.  In Exercise 3 \cite[p259]{MacD} it was proved  that $\omega (P_{\lambda}) = P_{\lambda}$.  Consequently, 
$$\omega (P_{\lambda}) = \sum g_{\lambda \mu} \omega (s_{\mu}) = \sum g_{\lambda \mu} s_{\mu '}$$ 
and the result follows. \end{proof}
 
Finally, we present two inequalities that will be useful in the following sections and relate amenable tableaux of different shape and content.

\begin{lemma}\label{ineq}

Given $\lambda \in D(n)$ and $\mu \in P(n)$, if $r \leq l(\lambda)+1$ and $s \geq \lambda_1+1$ then
$$g_{\lambda \mu} \leq g_{(\lambda + 1^r )(\mu + 1^r)}$$
and 
$$
g_{\lambda \mu} \leq g_{(\lambda \cup s) (\mu \cup s)}. 
$$
\end{lemma}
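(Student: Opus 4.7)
My plan is to prove each inequality by exhibiting an explicit injection from the amenable tableaux of shape $\mu$ and content $\lambda$ into the amenable tableaux counted by the right-hand side. For the first inequality, I would send $T$ to the tableau $T'$ obtained by placing an unmarked $i$ in the new box at position $(i,\mu_i+1)$ for each $i=1,\dots,r$ (with $\mu_i=0$ if $i>l(\mu)$); the resulting shape is $\mu+1^r$ and the content $\lambda+1^r$. For the second inequality, I would first use Lemma~\ref{no_high_row} to observe that row~$1$ of any amenable tableau contains only index-$1$ letters, whence $\mu_1\leq\lambda_1<s$ and so $\mu\cup s=(s,\mu_1,\mu_2,\dots)$; then $T'$ is obtained by shifting every index of $T$ up by one ($l\mapsto l+1$ and $l'\mapsto(l+1)'$), moving the result down one row, and placing a new top row of $s$ unmarked $1$'s, giving shape $\mu\cup s$ and content $\lambda\cup s$. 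Both maps are manifestly injective.

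Amenability conditions (1)--(4) and (6) follow quickly from Lemma~\ref{no_high_row} in both constructions. For the first map, the appended $i$ sits after entries of index $\leq i$ in its row and beneath entries of index $<i$ in its column, and no new marked entries are introduced. For the second map, all of (1)--(4) and (6) transfer directly under the index-shift, and the new top row of $1$'s is compatible with the shifted rows below because those rows now carry indices $\geq 2$.

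The main obstacle in both constructions is verifying condition~(5), the lattice condition on $w(T')\hat w(T')$. For the first inequality I would compare prefix unmarked counts: any prefix of $w(T')\hat w(T')$ processed through row $j$ of $T'$ satisfies $|l|_{T'}=|l|_T+\mathbf{1}[l\leq\min(j,r)]$ with marked counts unchanged. Because each inserted $l$ appears at the very start of row $l$'s reading and Lemma~\ref{no_high_row} prevents any index-$l$ letter from occupying an earlier row of $T$, the slack $|l|-|l+1|$ in $T'$ is never smaller than in the corresponding prefix of $T$, so a new forbidden equality $|l|_{T'}=|l+1|_{T'}$ would force the matching $T$-equality, and $T$'s lattice property then excludes the offending next symbol. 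The boundary cases at $l=r$ (where an inserted $r$ abuts an untouched row) and at the interface with $\hat w(T')$ (where the total counts are pinned to the strictly decreasing values $\lambda_l$) are handled by direct calculation. For the second inequality the verification is much cleaner via the identity
\[
w(T')\hat w(T')=1^{s}\cdot\tau\bigl(w(T)\hat w(T)\bigr)\cdot(2')^{s},
\]
where $\tau$ denotes the index-shift just described. The leading block $1^s$ forces $|1|-|2|\geq s-\lambda_1\geq 1$ throughout the middle portion, handling the only new potential violation (at $l=1$), while for $l\geq 2$ the lattice condition on $w(T')\hat w(T')$ is exactly the image under $\tau$ of the lattice condition on $w(T)\hat w(T)$, which holds since $T$ is amenable.
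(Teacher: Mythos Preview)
Your proposal is correct and uses exactly the same two injections as the paper: appending an unmarked $i$ at the end of row $i$ for $1\le i\le r$, and shifting all indices up by one before prepending a top row of $s$ unmarked $1$'s. The paper merely asserts that amenability is ``straightforward to verify'' via Lemma~\ref{no_high_row}, whereas you actually carry out the verification of condition~(5); your identity $w(T')\hat w(T')=1^{s}\cdot\tau(w(T)\hat w(T))\cdot(2')^{s}$ and the resulting bound $|1|-|2|\ge s-\lambda_1\ge 1$ are a clean way to do this for the second map.
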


\begin{proof}
Consider an amenable tableau $T$ of shape $\mu$ and content $\lambda$.
To prove the first inequality, for $1 \leq i \leq r$ append a box containing $i$ to row $i$ on the right side of $T$. By Lemma \ref{no_high_row} it is straightforward to verify that this is an amenable tableau of shape $\mu + 1^r$ and content $\lambda +1^r$. For the second inequality, replace each entry $i$ with $(i+1)$ and each entry $i '$  with $(i+1)'$ for $1 \leq i \leq l(\lambda)$ to form $T'$. Then append a row of $s$ boxes each containing $1$ to the top of $T'$. Again, it  is straightforward to check this is an amenable tableau of shape $ \mu \cup s$ and content $\lambda \cup s$.\end{proof}

\begin{example}
Consider the amenable tableau $\begin{array}{lcc}
1'&1&1\\
1'&2&2\\
1&&\end{array}$. Then the following two tableaux illustrate the operations utilised in proving the first and second inequalities, respectively.
$$
\begin{array}{llll}
1'&1&1&\mathbf{1}\\
1'&2&2&\mathbf{2}\\
1&\mathbf{3}&&
\end{array}
\qquad
\begin{array}{llllll}
\mathbf{1}&\mathbf{1}&\mathbf{1}&\mathbf{1}&\mathbf{1}&\mathbf{1}\\
{2'}&2&2&&&\\
{2'}&3&3&&&\\
{2}&&&&&
\end{array}$$
\end{example}

\section{Multiplicity free Schur expansions}
Despite the number of conditions amenable tableaux must satisfy, it transpires that most Schur $P$-functions do not have multiplicity free expansions in terms of Schur functions.

\begin{example}
Neither $P_{541}$ nor $P_{654}$ is multiplicity free. We see this in the first case by observing there are at least two amenable tableaux of shape $4321$ and content $541$:
$$\begin{array}{llll}
1'&1&1&1\\
1&2'&2&\\
2&2&&\\
3&&&
\end{array}
\qquad
\begin{array}{llll}
1'&1&1&1\\
1&2'&2&\\
2'&3&&\\
2&&&
\end{array}.$$
In the second case we note that there are at least two amenable tableaux of shape $54321$ and content $654$:
$$\begin{array}{lllll}
1'&1&1&1&1\\
1&2'&2&2&\\
2'&3'&3&&\\
2&3'&&&\\
3&&&&
\end{array}
\qquad
\begin{array}{lllll}
1'&1&1&1&1\\
1&2'&2&2&\\
2&2&3'&&\\
3'&3&&&\\
3&&&&
\end{array}.$$
\end{example}

However, in some cases it is easy to deduce a certain Schur $P$-function expansion is multiplicity free as we can give a precise description of it in terms of Schur functions.

\begin{proposition}\label{n_and_hook}
$$P_n = \sum _{1\leq k \leq n} s_{k1^{n-k}}$$
and
$$
P_{(n-1)1} = \sum _{1\leq k \leq n-1} s_{k1^{n-k}} + \sum _{2\leq k \leq n-2} s_{k21^{n-k-2}}.$$
\end{proposition}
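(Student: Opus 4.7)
The plan is to compute $g_{\lambda\mu}$ directly from the definition $P_\lambda=\sum_\mu g_{\lambda\mu}s_\mu$ by classifying amenable tableaux of the appropriate content and shape in parallel for the two formulas.

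For $P_n$ the content is $(n)$, so every entry of $T$ is a $1$ or a $1'$. The first step is to show $\mu$ must be a hook. If $\mu$ contained a $(2,2)$-cell, the row conditions ``weakly increasing, at most one $1'$'' would force $(1,2)=1$, and the column conditions ``weakly increasing, at most one $1$'' would force $(1,1)=1'$; then $(2,2)\ge\max\{(1,2),(2,1)\}\ge 1$ compels $(2,2)=1$, producing two unmarked $1$s in column~$2$ and contradicting condition~(4). Hence $\mu=k1^{n-k}$, and for each such hook I would exhibit the unique amenable filling explicitly: when $k<n$, place a $1'$ at $(1,1)$, $1$s across the rest of row~$1$, $1'$s down column~$1$, and a single $1$ at the bottom of column~$1$ (the terminal $1$ being forced by condition~(6), since otherwise $w(T)$ would end in a $1'$ with no later unmarked $1$); when $k=n$ the shape is a single row filled with $1$s.

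For $P_{(n-1)1}$ the content carries a single additional $2$ or $2'$, which by Lemma~\ref{no_high_row} must sit in a row of index $\ge 2$, so row~$1$ is filled as in the previous case. The $(2,2)$-cell analysis now admits one extra configuration, namely the single $2$-entry occupying $(2,2)$ itself with $\mu=k21^{n-k-2}$; every other shape containing a $(2,2)$-cell is still ruled out by producing two unmarked $1$s in column~$2$. For each hook and each near-hook I would then construct a unique amenable tableau by forcing row~$1$ as before, excluding the possibility that the extra entry is a marked $2'$ (since then no unmarked $2$ would appear in $w$ and condition~(6) at $i=2$ would fail), and forcing the position of the unmarked $2$ together with the remaining $1$s and $1'$s in column~$1$ via column monotonicity, condition~(4), and condition~(6) at $i=1$. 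Lemma~\ref{g_and_conjugate} halves the case analysis, since both the hook family and the $k21^{n-k-2}$ family are closed under conjugation (the latter via $k\leftrightarrow n-k$).

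The most delicate step is verifying the lattice property on $w(T)\hat w(T)$ for each constructed near-hook filling, because the interleaving of the single unmarked $2$, the boundary $1$ at the bottom of column~$1$, and the column of $1'$s with their augmented reverse counterparts must succeed simultaneously at every position. A cleaner alternative that sidesteps this bookkeeping is to derive the second formula from the first via the Pieri-type identity $P_{n-1}P_1=P_n+P_{(n-1)1}$, verified by counting the shifted coefficients $f^{\lambda}_{(n-1)(1)}$ for strict $\lambda\vdash n$, then expanding $P_{n-1}\cdot s_1$ by applying the classical Pieri rule for Schur functions to each hook summand of $P_{n-1}$ and subtracting the already-established expansion of $P_n$.
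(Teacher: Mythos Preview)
Your direct approach is essentially the paper's: both argue that a $2\times 2$ block cannot be filled with $1,1'$ alone (forcing hook shapes for $P_n$), then observe that with content $(n-1,1)$ the single unmarked $2$ can only sit at the end of the first column or at cell $(2,2)$, giving the hook and near-hook shapes for $P_{(n-1)1}$. You supply more of the forcing details via conditions (4) and (6) than the paper's one-paragraph sketch does, and your invocation of Lemma~\ref{g_and_conjugate} to pair conjugate shapes is a nice economy the paper does not use.

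Your Pieri alternative is a genuinely different route not in the paper. Deriving $P_{(n-1)1}=P_{n-1}\cdot s_1-P_n$ from $f^{\lambda}_{(n-1)(1)}$ and then expanding $P_{n-1}\cdot s_1$ by the ordinary Pieri rule sidesteps the lattice bookkeeping entirely; it is both shorter and more robust than the cell-by-cell verification.

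In fact, either of your approaches, carried through carefully, exposes a slip in the stated formula: the hook sum for $P_{(n-1)1}$ should run over $2\le k\le n-1$, not $1\le k\le n-1$. The term $s_{1^n}$ cannot occur, since by Lemma~\ref{g_and_conjugate} one has $g_{(n-1,1)\,(1^n)}=g_{(n-1,1)\,(n)}=0$ (a single row has no cell in row $\ge 2$ to hold the $2$). Your conjugation remark already hints at this, as the stated range $1\le k\le n-1$ is not closed under $k\mapsto n-k+1$. The Pieri computation gives $\sum_{2\le k\le n-1}s_{k1^{n-k}}+\sum_{2\le k\le n-2}s_{k21^{n-k-2}}$ directly.
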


\begin{proof}
For the first result observe that $g_{n(k1^{n-k})}=1$ since the tableau filled with one $1'$ and $(k-1)$ $1$s in the first row and one $1$ and $(n-k)$ $1'$s in the first column is the only amenable tableau of shape $k1^{n-k}$ and content $n$. Then observe that $g_{n\mu}=0$ for any other $\mu$ since we have no way to fill a $2\times 2$ rectangle with only $1$ or $1'$ and create an amenable tableau.

For the second result note that since the content of any tableau we create is $(n-1)1$ we will be filling our diagram $(n-1)$ $1$ or $1'$s and one $2$. As in the previous case if our resulting tableau is to be amenable the $1$s must appear in the first row and column marked or unmarked as necessary. The unmarked $2$ can now only appear in one of two places, either at the end of the second row, or the end of the first column.
\end{proof}

A third multiplicity free expansion is obtained from the determination of when a Schur $P$-function is equal to a Schur function.

\begin{theorem}\label{P_equal_schur}
$$P_\lambda = s_\lambda \mbox{ if and only if }\lambda \mbox{ is a staircase.}$$\end{theorem}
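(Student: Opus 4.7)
The plan is to establish both implications by combining a dominance-order pinch with direct analysis of the amenability conditions.

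For the direction ``$\lambda$ is a staircase $\Rightarrow P_\lambda = s_\lambda$,'' write $\lambda = k(k-1)\cdots 1$. I would first show that $g_{\lambda\mu}>0$ forces $\mu=\lambda$. By Lemma~\ref{no_high_row}, every entry of value $\geq i$ in an amenable tableau of content $\lambda$ sits in a row of index $\geq i$, and the content $\lambda$ contains exactly $(k-i+1)(k-i+2)/2$ such entries; hence $\sum_{j\geq i}\mu_j \geq (k-i+1)(k-i+2)/2 = \sum_{j\geq i}\lambda_j$ for every $i$, so $\mu\leq\lambda$ in dominance order. Applying Lemma~\ref{g_and_conjugate} gives $g_{\lambda\mu'}>0$, and repeating the argument on $\mu'$ yields $\mu'\leq\lambda$; since dominance reverses under conjugation and $\lambda'=\lambda$, this becomes $\mu\geq\lambda$ in dominance, so $\mu=\lambda$. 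For $g_{\lambda\lambda}=1$, the matched partial sums force each row $i$ to consist of exactly $\lambda_i$ entries of value $i$. Condition (3) then permits at most one $i'$ per row, and if present it must be the leftmost cell of row $i$, so in the reading word $w$ this $i'$ would appear after every unmarked $i$ from row $i$, contradicting condition (6). Hence every entry is unmarked, and the canonical filling with row $i$ equal to $i\,i\cdots i$ is straightforward to verify to satisfy the lattice condition and so is amenable.

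For the converse I would argue the contrapositive. Suppose $\lambda$ is strict but not a staircase; the key observation is that $\lambda\neq\lambda'$. Since $\lambda$ has distinct parts, the differences $\lambda'_j-\lambda'_{j+1}$ lie in $\{0,1\}$; if $\lambda=\lambda'$, then $\lambda'$ is also strict, forcing each difference to equal $1$, and together with $\lambda'_1 = l(\lambda)$ this would make $\lambda$ the staircase of length $l(\lambda)$---a contradiction. The canonical tableau constructed above is amenable for every strict $\lambda$, so $g_{\lambda\lambda}\geq 1$, and Lemma~\ref{g_and_conjugate} yields $g_{\lambda\lambda'}=g_{\lambda\lambda}\geq 1$. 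Since $\lambda\neq\lambda'$, the Schur expansion of $P_\lambda$ contains both $s_\lambda$ and $s_{\lambda'}$ with positive coefficients, so $P_\lambda\neq s_\lambda$.

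The main technical obstacle is the uniqueness portion of the forward direction: combining the content-forced row structure with conditions (3) and (6) to exclude all primed entries. The dominance pinch via Lemmas~\ref{no_high_row} and \ref{g_and_conjugate} together with self-conjugacy of the staircase is the central device pinning down the shape $\mu$.
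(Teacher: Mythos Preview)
Your argument is correct, and it proceeds along genuinely different lines from the paper's in both directions.

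For the implication ``staircase $\Rightarrow P_\lambda=s_\lambda$'' the paper simply invokes Macdonald's exercise, whereas you give a self-contained proof: the dominance pinch via Lemma~\ref{no_high_row}, Lemma~\ref{g_and_conjugate}, and the self-conjugacy $\lambda=\lambda'$ forces $\mu=\lambda$, and then conditions~(3) and~(6) eliminate primed entries. One tiny point worth making explicit: in the bottom row (length~$1$) your phrasing ``this $i'$ would appear after every unmarked $i$'' presupposes an unmarked $i$ exists; when the single cell is $i'$ there is none, but condition~(6) still fails vacuously since no unmarked $i$ occurs at all. For the converse, the paper builds an explicit second amenable tableau by moving a box from a ``long'' row to a new bottom row, producing a concrete shape $\mu\neq\lambda$ with $g_{\lambda\mu}>0$. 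Your route is cleaner: you observe that a strict non-staircase partition is never self-conjugate, so Lemma~\ref{g_and_conjugate} immediately furnishes $g_{\lambda\lambda'}=g_{\lambda\lambda}\ge 1$ with $\lambda'\neq\lambda$. Your approach reuses the paper's own machinery (Lemmas~\ref{no_high_row} and~\ref{g_and_conjugate}) and avoids both the external citation and the ad~hoc tableau surgery; the paper's construction, on the other hand, exhibits the second shape explicitly, which can be useful when one wants quantitative information beyond mere inequality.
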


\begin{proof}
The reverse implication is proved in Exercise 3(b)  \cite[p 259]{MacD}. For the forward implication assume that $\lambda = \lambda _1 \ldots \lambda _k \in D(n)$ is not a staircase. It follows there must exist at least one $1\leq i \leq k$ for which $\lambda _i \geq \lambda _{i+1}+2$. We are going to show that in this situation there are at least two amenable tableaux with content $\lambda$.

Consider the tableau, $T$, of shape and content $\lambda$ where the $j$-th row is filled with unmarked $j$s. Clearly $T$ is amenable. Now consider the first row $i$ for which $\lambda _i \geq \lambda _{i+1}+2$. Delete the rightmost box from this row and append it to the first column of $T$ to form a tableau $T'$ of shape $\lambda _1 \lambda _2 \ldots \lambda _i -1 \ldots \lambda _k1$ and content $\lambda$. Now alter the entries in the first column of $T'$ as follows. In row $i$ change $i$ to $i'$ and in rows $ i+1\leq j\leq k$ change $j$ to $j-1$. Finally in row $k+1$ change $i$ to $k$. Now $T'$ is an amenable tableau, and we are done.
\end{proof}

As we will see, staircases play an important role in the determination of multiplicity free expansions of Schur $P$-fuctions. 

\begin{theorem}\label{multfree}
For $\lambda \in D(n)$ the Schur function expansion of $P_\lambda$ is multiplicity free if and only if $\lambda$ is one of the following
\begin{enumerate}
\item staircase
\item near staircase
\item $k(k-1)(k-2) \dots 4 3$
\item $k(k-1).$
\end{enumerate}
\end{theorem}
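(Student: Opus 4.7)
The proof splits into sufficiency (each of the four families yields a multiplicity-free expansion) and necessity (any $\lambda$ outside those families produces a $\mu$ with $g_{\lambda\mu} \geq 2$), and I would handle each direction separately.

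For sufficiency, the goal is to show $g_{\lambda\mu} \leq 1$ for all $\mu$ in each of the four cases. Case (1) is immediate from Theorem \ref{P_equal_schur}, which gives $P_\lambda = s_\lambda$. For case (4), $\lambda = k(k-1)$ has length two with content concentrated on $\{1,1',2,2'\}$; Lemmas \ref{no_high_row} and \ref{increasing_primes}, together with the amenability axioms, force the $1$s and $1'$s into the first row and first column, after which the positions of the $2$s and $2'$s are essentially determined by the shape $\mu$, so a short case check suffices. Case (3) with $\lambda = k(k-1)\cdots 43$ admits an analogous but slightly longer analysis, inducting on the length of $\lambda$ and using the staircase-like rigidity of the content to force placements row by row.

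Case (2), the near staircases, is the most delicate. Every near staircase has the form $\nu + 1^r$ or $\nu \cup s$ with $\nu$ a staircase, so the situation is a small perturbation of the staircase case where $P_\nu = s_\nu$ already. Given a candidate $\mu$, I expect that once the placements of the ``extra'' row or column dictated by the perturbation are pinned down (using Lemma \ref{no_high_row} to locate large entries and Lemma \ref{increasing_primes} to constrain the primed letters), the rest of the amenable filling is forced. Lemma \ref{g_and_conjugate} should halve the casework by symmetrizing between the two subtypes of near staircase, and Lemma \ref{f_and_g} can be invoked to shift the counting problem into the language of shifted skew tableaux where the staircase-based structure is more visible.

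For necessity, I would use the two base cases $541$ and $654$ exhibited in the example preceding the theorem, each of which already has some $\mu$ with $g_{\lambda\mu} \geq 2$. Lemma \ref{ineq} shows that multiplicity is monotone under $\lambda_0 \mapsto \lambda_0 + 1^r$ (with $r \leq l(\lambda_0)+1$) and $\lambda_0 \mapsto \lambda_0 \cup s$ (with $s \geq (\lambda_0)_1+1$), so it suffices to prove the combinatorial claim that every strict partition outside the four families can be reached from $541$ or $654$ by a sequence of such operations. I would verify this by downward induction on $|\lambda|$, showing that any $\lambda$ outside the four families admits a valid inverse move (stripping the largest part, or subtracting $1$ from an initial segment of parts) that lands in a strict partition still outside the four families, with the induction bottoming out at $541$ or $654$. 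The main obstacle I anticipate is precisely this buildability claim: enumerating the boundary cases where a one-step reduction would prematurely fall into a good family requires a delicate classification of how the four families sit inside $D(n)$. The near-staircase uniqueness in case (2) of sufficiency is the secondary obstacle, mostly technical once the right induction and symmetry are in place.
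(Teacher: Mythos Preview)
Your necessity argument has a genuine gap: the two base cases $541$ and $654$ are not enough. The operations of Lemma~\ref{ineq} strictly increase $|\lambda|$, so any partition built from $541$ or $654$ has size at least $10$. But there are strict partitions outside the four families with smaller size, for instance $\lambda=42$ (size $6$) and $\lambda=531$ (size $9$). Neither is a staircase, near staircase, of the form $k(k-1)\cdots43$, or of the form $k(k-1)$, yet neither can be reduced by your inverse moves to a smaller bad partition: from $42$ one reaches only $2$, $32$, $31$ (all good), and from $531$ one reaches only $31$, $431$, $421$ (all good). So your downward induction cannot bottom out at $541$ or $654$. The paper deals with this by not relying solely on those two examples: it splits the bad partitions into four structural types and, for two of those types, constructs explicit pairs of amenable tableaux for the infinite families $\nu=k(k-2)(k-3)\cdots431$ and $\nu=k(k-2)(k-3)\cdots432$ (which include $531$ and $42$ as the minimal instances), only then invoking Lemma~\ref{ineq}. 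You would need analogous additional base families.

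On the sufficiency side you are closer to the paper, but your plan for cases (2) and (3) is more laborious than necessary. For a near staircase of the form $m\,k(k-1)\cdots21$, Lemma~\ref{increasing_primes} applied directly to the content forces every amenable tableau to contain no $i'$ for $i>1$, which immediately gives uniqueness. For the other type of near staircase and for $k(k-1)\cdots43$, the paper uses Lemma~\ref{f_and_g} to recast $g_{\lambda\mu}$ as a count of shifted skew tableaux with \emph{staircase} content, whereupon Lemma~\ref{increasing_primes} forbids all marked entries and uniqueness follows at once. This is cleaner than an induction on length and avoids the row-by-row placement analysis you anticipate.
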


\begin{proof}
If $\lambda \in D(n)$ is not one of the partitions listed in Theorem \ref{multfree} then it must satisfy one of the following:

\begin{enumerate}
\item For all $ 1 \leq i < l(\lambda)$, $\lambda_i = \lambda_{i+1} +1$ and $\lambda_{l(\lambda)} \geq 4$ and $l(\lambda)\geq 3$.
\item There exists exactly one $1< i < l(\lambda)$ such that $\lambda_ i \geq \lambda_{i+1} + 3$ and $\lambda_j = \lambda_{j+1} +1$ for all $ 1 \leq j < l(\lambda), j\neq i$ and $\lambda _{l(\lambda)}=1$.
\item There exists $i < j$ such that $ \lambda_i \geq \lambda_{i+1} +2$  and $\lambda_j \geq \lambda_{j+1} +2$ and $\lambda _{l(\lambda)}=1$.
\item There exists $1\leq i < l(\lambda)$ such that $\lambda _i\geq \lambda _{i+1}+2$ and $\lambda _{l(\lambda)}\geq 2$.
\end{enumerate}

If $\lambda$ satisfies the first criterion then by observing $P_{654}$  is not multiplicity free and Lemma \ref{ineq}, it follows that $P_\lambda$ is not multiplicity free. Similarly, if $\lambda$ satisfies the second criterion then by observing that $P_{541}$ has multiplicity  and Lemma \ref{ineq}, again $P_\lambda$ has  multiplicity.  If $\lambda$ satisfies the third criterion then consider the partition $\nu = k(k-2)(k-3)\ldots 431 \in D(n)$ and the partition $\mu= (k-1)(k-2)(k-3)\ldots 432 \in D(n)$  for $k\geq 5$. We now show that $g_{\nu\mu} >1$. Take a diagram $\mu$ and fill the first row with one $1'$ and $(k-2)$ $1$s. For $i>1$ fill the $i$-th row with one $(i-1)$ and the rest $i$s. This is clearly an amenable tableau $T$. If we now change the $(k-3)$ to a $(k-3)'$ in the second column of the penultimate row of $T$ we obtain another amenable tableau of shape $\mu$ and content $\nu$. Thus $g_{\nu\mu}>1$ and $P_\nu$ is not multiplicity free. This combined with Lemma \ref{ineq} yields that $P_\lambda$ is not multiplicity free if $\lambda$ satisfies the third criterion. Lastly, if $\lambda$ satisfies the fourth criterion then consider the partition $\nu = k(k-2)(k-3)\ldots 432 \in D(n)$ and the staircase $\mu= (k-1)(k-2)(k-3)\ldots 4321 \in D(n)$  for $k\geq 4$. We now prove that $g_{\nu\mu} >1$. Take the staircase $\mu$ and fill the first row with one $1'$ and $(k-2)$ $1$s. For $1<i<k-1$ fill the $i$-th row with one $(i-1)$ and the rest $i$s. Fill the last row with one $(k-2)$. It is straightforward to see that this is  an amenable tableau $T$. If we now change the $(k-2)$ to a $(k-2)'$ in the second column of the penultimate row of $T$ we obtain another amenable tableau of shape $\mu$ and content $\nu$ and $g_{\nu\mu}>1$ as desired. Consequently, $P_\nu$ is not multiplicity free, which combined with Lemma \ref{ineq} shows that $P_\lambda$ is not multiplicity free if $\lambda$ satisfies the fourth criterion.

Finally it remains to show that if $\lambda\in D(n)$ is one of the partitions listed in Theorem \ref{multfree} then $g_{\lambda\mu}\leq 1$ for all $\mu$. If $\lambda$ is a staircase the result follows from Theorem \ref{P_equal_schur}. If $\lambda$ is a near staircase of the form $mk(k-1)\ldots 321$ then by Lemma \ref{increasing_primes} it follows that any amenable tableau of content $\lambda$ must contain no $i'$ for $i>1$ and thus there can exist at most one amenable tableau of shape $\mu$ and content $\lambda$ for any given $\mu$. Consequently $g_{\lambda\mu}\leq 1$ for all $\mu$. Similarly if $\lambda$ is the other type of near staircase or of the form $k(k-1)\ldots 43$ then by Lemma \ref{f_and_g} we can calculate $g_{\lambda\mu}$ by enumerating all amenable shifted skew tableaux, $T$, of shape $\mu + \nu /\lambda$ and \emph{staircase} content. By Lemma \ref{increasing_primes} it follows that no entry in $T$ can be marked. {}From this we can deduce  that  there can exist at most one amenable shifted skew tableau and so $g_{\lambda\mu}\leq 1$. Finally a proof similar to that of Proposition \ref{n_and_hook} yields that $P_{k(k-1)}$ is multiplicity free.\end{proof}

\begin{remark} The reverse direction of the above theorem can also be proved via Lemma \ref{f_and_g} and \cite[Theorem 2.2 ]{Bessenrodt}.\end{remark}

\section{Multiplicity free spin character expansions} 
The twisted symmetric group $\tilde{S} _n$ is presented by
$$\langle z, t_1, t_2, \ldots , t_{n-1}|\ z^2=1, t_i^2=(t_it_{i+1})^3=(t_it_j)^2=z \ |i-j|\geq 2\rangle .$$
Moreover, the ordinary representations of $\tilde{S} _n$ are equivalent to the projective representations of the symmetric group $S_n$ and in \cite{Stembridge} Stembridge determined the product of a basic spin character of $\tilde{S} _n$ with an irreducible character of $S_n$, whose description we include here for completeness. If $\lambda =\lambda _1\ldots \lambda _k \in D(n)$ then define
$$\varepsilon _\lambda =\left\{ \begin{array}{ll}
1&\mbox{ if $n-k$ is even}\\
\sqrt{2}&\mbox{ if $n-k$ is odd} \end{array}\right. .$$
Let $\phi ^\lambda$ be an irreducible spin character of $\tilde{S} _n$, $\chi ^\mu$ for $\mu\in P(n)$ be an irreducible character of $S_n$ and $\langle\cdot , \cdot\rangle$ be defined on $\Lambda$ by $\langle s_\mu, s_\nu \rangle =\delta _{\mu\nu}$ then we have

\begin{theorem}\cite[Theorem 9.3]{Stembridge}
If $\lambda \in D(n)$, $\mu \in P(n)$ then
\begin{equation}\langle \phi ^n\chi ^\mu , \phi ^\lambda \rangle = \frac{1}{\varepsilon _\lambda\varepsilon _n} 2^{(l(\lambda)-1)/2}g_{\lambda\mu}\label{char}\end{equation}unless $\lambda = n$, $n$ even, and $\mu = k 1^{n-k}$ in which case the multiplicity is $0$ or $1$.
\end{theorem}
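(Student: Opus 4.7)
The plan is to translate the spin-character inner product into a calculation in the symmetric function ring $\Lambda$, using Stembridge's characteristic map for the spin representations of $\tilde{S}_n$. Under this map the spin character ring embeds into $\Gamma$, with $\phi^\lambda$ sent to $\varepsilon_\lambda 2^{-(l(\lambda)-1)/2} P_\lambda$, the normalization being chosen so that the standard inner product on $\Lambda$ restricted to the image agrees with the spin-character inner product on $\tilde{S}_n$. Pulling $\chi^\mu$ back to $\tilde{S}_n$ via the surjection $\tilde{S}_n \twoheadrightarrow S_n$, the tensor product $\phi^n \chi^\mu$ is again a spin character, and its image under the characteristic map is the ordinary product $\varepsilon_n^{-1} P_n s_\mu$ up to the same kind of normalization.

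With this dictionary in hand, the inner product on the left of \eqref{char} reduces, up to the expected scalar factors, to the computation of $\langle P_n s_\mu, P_\lambda \rangle$ in $\Lambda$. Using the defining expansion $P_\lambda = \sum_\nu g_{\lambda\nu} s_\nu$ and the orthonormality of the Schur basis, one rewrites
\begin{equation*}
\langle P_n s_\mu, P_\lambda \rangle = \sum_\nu g_{\lambda\nu} \langle P_n s_\mu, s_\nu \rangle,
\end{equation*}
then inserts the explicit formula $P_n = \sum_k s_{k 1^{n-k}}$ from Proposition \ref{n_and_hook} and applies the Pieri-type expansion of each $s_{k 1^{n-k}} s_\mu$. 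A direct bookkeeping argument collapses the sum to $g_{\lambda\mu}$, and restoring the scalars $\varepsilon_\lambda$, $\varepsilon_n$ and $2^{(l(\lambda)-1)/2}$ yields the claimed formula.

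The hard part is the exceptional case $\lambda = n$ with $n$ even. Here $n - l(\lambda) = n - 1$ is odd, so $\varepsilon_\lambda = \varepsilon_n = \sqrt{2}$, and the proposed formula would produce the non-integer value $\tfrac{1}{2} g_{n\mu}$, which cannot be a multiplicity. The resolution is that in this regime both $\phi^n$ and $\phi^\lambda$ are members of pairs of associate spin characters, so the characteristic map is only well defined up to this doubling, and the argument above really computes the combined multiplicity of the associate pair. To finish, one chooses specific representatives and works directly in $\tilde{S}_n$, showing by an ad hoc argument that the individual multiplicity is $0$ or $1$. Tracking this associate-character ambiguity throughout the proof, and recovering an integer answer from a half-integer expression at the boundary, is the principal technical obstacle.
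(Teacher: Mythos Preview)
The paper does not prove this theorem at all: it is quoted verbatim from \cite[Theorem~9.3]{Stembridge} and then used as a black box to deduce the next result. There is therefore no in-paper argument to compare your proposal against.

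That said, your sketch has a genuine gap. The Frobenius-type characteristic maps (both the ordinary one for $S_n$ and Stembridge's spin version for $\tilde S_n$) convert \emph{induction} products into multiplication in $\Lambda$; they do \emph{not} convert internal tensor products into multiplication. Here $\phi^n\chi^\mu$ is an internal tensor product of characters on the single group $\tilde S_n$, so its image under the spin characteristic map is not $\varepsilon_n^{-1} P_n\, s_\mu$. Indeed, $P_n\, s_\mu$ is homogeneous of degree $2n$ while $P_\lambda$ has degree $n$, so $\langle P_n\, s_\mu,\, P_\lambda\rangle = 0$ identically, and your proposed computation collapses to $0$ rather than to $g_{\lambda\mu}$. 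The step ``a direct bookkeeping argument collapses the sum to $g_{\lambda\mu}$'' is precisely where this would surface; no Pieri manipulation can repair a degree mismatch. Stembridge's actual proof works instead with explicit character values on conjugacy classes (via his Frobenius-type formula for spin characters), not with a multiplicative identity in $\Lambda$.
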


Using this formula we can deduce

\begin{theorem}
If $\lambda\in D(n), \mu\in P(n)$ then the coefficient of $\phi ^\lambda$ in $\phi ^n\chi ^\mu$ is multiplicity free for all $\mu$ if and only if $\lambda$ is one of the following
\begin{enumerate}
\item $n$
\item $(n-1)1$
\item $k(k-1)$
\item $(2k+1)21$
\item $543$
\item $431$.
\end{enumerate}
\end{theorem}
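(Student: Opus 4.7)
The plan is to apply Stembridge's formula (\ref{char}): outside of the exceptional case $\lambda = n$, $n$ even (where the multiplicity is directly asserted to be $0$ or $1$), the multiplicity of $\phi^\lambda$ in $\phi^n\chi^\mu$ equals $c_\lambda\, g_{\lambda\mu}$, where
\[
c_\lambda \;:=\; \frac{1}{\varepsilon_\lambda\varepsilon_n}\,2^{(l(\lambda)-1)/2}.
\]
Setting $k:=l(\lambda)$ and unwinding the definition of $\varepsilon_\lambda,\varepsilon_n$ on the parities of $n$ and $n-k$, a short case check gives $\varepsilon_\lambda\varepsilon_n\in\{1,\sqrt{2},2\}$, and hence $c_\lambda$ is a non-negative integer power of $2$ in every non-exceptional case. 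Reading off the values, one finds $c_\lambda = 1$ precisely when $k\leq 2$, or when $k=3$ and $n$ is even, and $c_\lambda\geq 2$ otherwise.

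Since $P_\lambda\neq 0$, there exists some $\mu$ with $g_{\lambda\mu}\geq 1$, so whenever $c_\lambda\geq 2$ at least one multiplicity is at least $2$ and multiplicity-freeness fails. This rules out every $\lambda$ of length $\geq 4$, and every $\lambda$ of length $3$ whose size is odd. Aside from the $\lambda = n$ cases (handled by the exceptional clause when $n$ is even, or directly by Proposition~\ref{n_and_hook} when $n$ is odd), the surviving candidates are the $\lambda$ with $c_\lambda = 1$ satisfying $g_{\lambda\mu}\leq 1$ for every $\mu$, which is exactly the condition in Theorem~\ref{multfree}.

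It remains to intersect the list of Theorem~\ref{multfree} with the length and parity constraints identified above. For $k=2$, the multiplicity-free strict partitions from Theorem~\ref{multfree} are: the staircase $21$, the two-part near staircases ($r1$ for $r\geq 2$ from $\mu\cup r$ with $\mu$ the length-$1$ staircase, and $32$ from $21+1^2$), the partition $43$ from the $k(k-1)\ldots 43$ family, and every $k(k-1)$. These collapse into the two families $(n-1)1$ and $k(k-1)$. For $k=3$ with $|\lambda|$ even, the candidates from Theorem~\ref{multfree} are: the staircase $321$; the length-$3$ near staircases, which are either $321+1^r$ (yielding $421,431,432$) or $21\cup r$ for $r\geq 3$ (yielding $r21$); and $543$ from the $k(k-1)\ldots 43$ family. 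Keeping only those of even size discards $421$, $432$, and every $r21$ with $r$ even, leaving $(2k+1)21$ (which absorbs $321$), $431$, and $543$. Combined with case $\lambda = n$, this exactly reproduces the six families in the theorem.

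The main obstacle is the initial parity computation that pins down $c_\lambda$; once one verifies that $c_\lambda = 1$ holds only in the two regimes isolated above, the rest of the argument is a bounded enumeration against Theorem~\ref{multfree}, the only subtlety being to track the even-size restriction that prunes the $3$-part near-staircase list down to $(2k+1)21$ and $431$.
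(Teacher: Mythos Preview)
Your proof is correct and follows essentially the same strategy as the paper: use Stembridge's formula to reduce the question to determining when the scalar $c_\lambda$ equals $1$ and $g_{\lambda\mu}\le 1$ for all $\mu$, handle the exceptional case $\lambda=n$ with $n$ even separately, and then intersect the length/parity constraints with the classification of Theorem~\ref{multfree}. Your organization of the $c_\lambda$ computation (directly verifying it is an integer power of $2$ and reading off when it equals $1$) is slightly more streamlined than the paper's contradiction argument, but the substance is the same.
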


\begin{proof}
Considering Equation \ref{char} we first show no $\lambda$ exists such that $g _{\lambda \mu}\geq 2$ but $\langle \phi ^n\chi ^\mu , \phi ^\lambda \rangle$ is multiplicity free. If such a $\lambda$ did exist then
$$2^{(l(\lambda)-1)/2}<\varepsilon _\lambda \varepsilon _n$$
where $\varepsilon _\lambda \varepsilon _n = 1, \sqrt{2}, 2$ depending on $\lambda$ and its size. However, if $\varepsilon _\lambda \varepsilon _n = 1$ then $l(\lambda) =0$ and if $\varepsilon _\lambda \varepsilon _n = \sqrt{2}$ then $l(\lambda) = 1$ so $\lambda = n$ but then $\varepsilon _\lambda \varepsilon _n \neq \sqrt{2}$ so we must have that 
$$2^{(l(\lambda)-1)/2}<2$$
and hence $l(\lambda) < 3$. Since $\varepsilon _\lambda , \varepsilon _n = \sqrt{2}$ it follows $n$ is even and we must in fact have $\lambda = n$. By Proposition \ref{n_and_hook} we know in this case $g _{\lambda\mu}\leq 1$ for all $\mu$ and we have our desired contradiction. Consequently if $\langle \phi ^n\chi ^\mu , \phi ^\lambda \rangle$ is multiplicity free then 
$g _{\lambda\mu}\leq 1$. Additionally we must have 
$2^{(l(\lambda)-1)/2}=\varepsilon _\lambda \varepsilon _n$ and so it remains for us to check three cases.
\begin{enumerate}
\item $\varepsilon _\lambda \varepsilon _n =1$: We have $l(\lambda)=1$ and so by Theorem \ref{multfree} $\lambda = n$, $n$ odd.
\item $\varepsilon _\lambda \varepsilon _n =\sqrt{2}$: We have $l(\lambda)=2$ and so by Theorem \ref{multfree} $\lambda = (n-1)1$, or $\lambda = k(k-1)$.
\item $\varepsilon _\lambda \varepsilon _n ={2}$: We have $l(\lambda)=3$ and since $\varepsilon _n =\sqrt{2}$ it follows that $n$ is even. Hence by Theorem \ref{multfree} $\lambda = (2k+1)21$, $\lambda = 543$, or $\lambda = 431$.
\end{enumerate}
\end{proof}
\providecommand{\bysame}{\leavevmode\hbox to3em{\hrulefill}\thinspace}\

\section*{Acknowledgements} The authors would like to thank the referee for their valuable comments.


\begin{thebibliography}{10}

\bibitem{Bessenrodt} C. \ Bessenrodt,  On multiplicity-free products of Schur $P$-functions, {\it Ann. Combin.} {\bf 6}, (2002), 119--124.

\bibitem{MacD} I. G. \ Macdonald, {\it Symmetric Functions and Hall Polynomials}, Second Edition, Oxford University Press, Oxford, 1995.

\bibitem{Stembridge}J.\ Stembridge, Shifted tableaux and the projective representations of symmetric groups,
{\it Adv. in Math.} {\bf 74}, (1989), 87--134. 

\bibitem{StembridgeII}J.\ Stembridge, Multiplicity-free products of Schur functions,
{\it Ann. Combin.} {\bf 5}, (2001), 113--121. 
\end{thebibliography}
\end{document}